\documentclass[11pt,a4paper]{article}
\usepackage{amsmath,amssymb,amsthm,amsfonts,latexsym,graphicx,subfigure}
\usepackage[all,import]{xy}
\usepackage[numbers,sort&compress]{natbib}
\usepackage{indentfirst}
\usepackage{fancyhdr,enumerate}
\usepackage{bbm,color}
\usepackage{tikz}
\usetikzlibrary{shapes.geometric, arrows}
\usepackage{accents,cases}
\usepackage{multirow}
\usepackage[lined, ruled, linesnumbered, longend]{algorithm2e}
\usepackage{hyperref}

\usepackage{array,float}
\newcommand{\PreserveBackslash}[1]{\let\temp=\\#1\let\\=\temp}
\newcolumntype{C}[1]{>{\PreserveBackslash\centering}p{#1}}
\newcolumntype{R}[1]{>{\PreserveBackslash\raggedleft}p{#1}}
\newcolumntype{L}[1]{>{\PreserveBackslash\raggedright}p{#1}}

\setlength{\textheight}{256mm} \setlength{\textwidth}{168mm}
\setlength{\oddsidemargin}{-5mm} \setlength{\evensidemargin}{0mm}
\setlength{\topmargin}{-23mm}

\makeatletter
\def\wbar{\accentset{{\cc@style\underline{\mskip8mu}}}}

\makeatother












\theoremstyle{plain}
\newtheorem{theorem}{Theorem}

\newtheorem{lemma}{Lemma}

\allowdisplaybreaks

\begin{document}

\title{Existence results for Tzitz\'eica equation via topological degree method on graphs}

\author{Kaizhe Chen\footnotemark[1]\and Heng Zhang\footnotemark[2]}

\footnotetext[1]{School of Gifted Young, University of Science and Technology of China, Hefei 230026, China. \\
Email address:
{\tt ckz22000259@mail.ustc.edu.cn} }

\footnotetext[2]{School of Mathematical Sciences, 
University of Science and Technology of China, Hefei 230026, China. \\
Email address:
{\tt  hengz@mail.ustc.edu.cn} }

\date{}\maketitle
\begin{abstract}
    We derive some existence results for the solutions of the Tzitz\'eica equation
    \begin{equation*}
    -\Delta u + h_1(x)e^{Au} + h_2(x)e^{-Bu}=0
    \end{equation*}
    and the  generalized Tzitz\'eica equation
    \begin{equation*}
    -\Delta u + h_1(x)e^{Au}(e^{Au}-1)+h_2(x)e^{-Bu}(e^{-Bu}-1)=0
    \end{equation*}
    on any connected finite graph \(G=(V, E)\). Here, \(h_1(x)>0\), \(h_2(x)>0\) are two given functions on \(V\), and \(A, B>0\) are two constants. Our approach involves computing the topological degree and using the connection between the degree and the critical group of an associated functional.

\end{abstract}

\section{Introduction}
Our research is motivated by the recent work of Jevnikar and Yang \cite{JY17}. In their previous work, they considered the following equation:
\begin{equation} \label{eq:1}
	\Delta u + h_1 e^u - h_2 e^{-2u} = 0 \qquad \mathrm{in}~B_1 \subset \mathbb{R}^2,
\end{equation}
where \(h_1\) and \(h_2\) are smooth positive functions, and \(B_1\) denotes the unit ball in \(\mathbb{R}^2\). Jevnikar and Yang provided the quantization of local blow-up masses associated with a sequence of solutions exhibiting blow-up behavior and established the general existence results for equation \eqref{eq:1}.

Equation \eqref{eq:1} is closely related to the Tzitz\'eica equation, which arises in differential geometry in the context of surfaces with constant affine curvature (see, for example, \cite{tz1, tz2, tz3}). Additionally, it is also related to Euler's equation for one-dimensional ideal gas dynamics, the Hirota--Satsuma equation, and others (see, for example, \cite{DP09, G1984, HR1980, HS1976}).

Unlike Euclidean spaces, the analysis on graphs has a wide range of applications, including neural networks, digital image processing, numerical computation, and more. Recently, partial differential equations (PDEs) arising in geometry and physics have been extensively studied. Grigor'yan, Lin, and Yang \cite{G1, G2, G3} first investigated the existence of solutions for a series of nonlinear elliptic equations on graphs using variational methods and the mountain pass theorem. For other related existence results of PDEs on graphs obtained via variational methods, we refer to \cite{Ge18, GJ18, GHS23, HS21, HSZ20, HS22, Hua1, Hua2, HLY20, HWY21, ZZ18}.

Similar to variational methods, topological degree theory is a powerful tool for studying the existence of solutions to partial differential equations in Euclidean space or on Riemann surfaces. For example, see Li \cite{Liyanyan}. Topological degree theory was first employed by Sun and Wang \cite{SW22} to prove the existence and non-existence of solutions for the Kazdan-Warner equation on finite graphs. For additional related results on the existence of PDEs on graphs using topological degree methods, we refer to \cite{Liu22, LSY24, LY24, CLWWY24, HQ24}.

To state the Tzitz\'eica equations in the graph setting, we review some notations.
  Let $G=(V,E)$ be a connected finite graph, where $V$ is the set of vertices and $E$ is the set of edges. Let
   $\mu :V\rightarrow (0,+\infty)$ and $\{w_{xy}:xy\in E\}$ be its measure and weights, respectively.
   We assume that the weights $w_{xy}$ on edges $xy \in E$ are always positive and symmetric. The Laplacian of a function $u: V\rightarrow \mathbb{R}$ reads as
$$\Delta u(x)=\frac{1}{\mu (x)} \sum_{y\sim x} w_{xy}(u(y)-u(x)),$$
where $y\sim x$ means  $xy\in E$. For two functions $u$ and $v$, the associated gradient form reads
$$\Gamma(u, v)(x) = \frac{1}{2\mu(x)} \sum_{y \sim x} w_{xy} (u(y) - u(x))(v(y) - v(x)).$$
Write $\Gamma(u) = \Gamma(u, u)$. We denote the length of its gradient by
\[
|\nabla u|(x) = \sqrt{\Gamma(u)(x)} = \left(\frac{1}{2\mu(x)} \sum_{y \sim x} w_{xy}(u(y) - u(x))^2 \right)^{1/2}.
\]
The integral of any function $u:V\rightarrow \mathbb{R}$ is given by
$$\int_{V}ud\mu=\sum_{x\in V}\mu(x)u(x),$$
and the integral average of $u$ is defined as
$$\overline{u}=\frac{1}{\text{Vol}(G)}\int_{V}ud\mu=\frac{1}{\text{Vol}(G)}\sum_{x\in V}\mu(x)u(x),$$
where $\text{Vol}(G)=\sum_{x\in V}\mu(x)$ is the volume of $G$. Denote by $L^{\infty}(V)$  the space of all functions $f:V\rightarrow\mathbb{R}$ with finite norm $\|f\|_{\infty}$ which is 
defined as $\|f\|_{\infty}=\max_{x\in V} |f(x)|$.

We are interested in the existence results of the Tzitz\'eica equation on finite graphs, namely \begin{equation}\label{eq:T1}
    -\Delta u + h_1(x)e^{Au} + h_2(x)e^{-Bu}=0,
\end{equation}
where  \(h_1(x)\) and \(h_2(x)\) are given functions on \(V\), with \(h_1(x)>0\), and \(A, B>0\) are two constants. 
We consider the map $\mathcal{F}_{h_1, h_2}:L^{\infty}(V)\rightarrow L^{\infty}(V)$ defined by
\begin{equation}\label{deg1}
    \mathcal{F}_{h_1, h_2}(u)= -\Delta u + h_1(x)e^{Au} + h_2(x)e^{-Bu},
\end{equation}
Denote by $B_R=\{u\in L^{\infty}(V): ||u||_{\infty}<R\}$. We conclude by Lemma \ref{priori1} that the Brouwer degree $\text{deg}(\mathcal{F}_{h_1, h_2}, B_R, 0)$ is well-defined for $R>K$ large. Then by the homotopic invariance, $\text{deg}(\mathcal{F}_{h_1, h_2}, B_R, 0)$ is independent of $R$. We define the topological degree by $$\mathbf{d}_{h_1, h_2}=\lim_{R\rightarrow+\infty}\text{deg}(\mathcal{F}_{h_1, h_2}, B_R, 0).$$ For a more comprehensive understanding of the Brouwer degree, including its key properties such as homotopic invariance and Kronecker existence, we refer the readers to Chang \cite[Chapter 3]{Chang05}.

For the topological degree $\mathbf{d}_{h_1, h_2}$, we have the following results.
\begin{theorem}\label{deg:T1}
    Let $h_1(x)$ be a given positive function on $V$, and $A$, $B>0$ be two constants. Then $$
    \mathbf{d}_{h_1, h_2} = \begin{cases}
1, & \max_V h_2(x)<0, \\
0, & \min_V h_2(x)>0. \\
\end{cases}
$$
\end{theorem}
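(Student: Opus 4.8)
The plan is to compute the two cases separately, in each case exploiting a homotopy that deforms $\mathcal{F}_{h_1,h_2}$ to a map whose degree is easy to evaluate, while keeping the zero set uniformly bounded along the homotopy so that the degree is well-defined and invariant.

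\textbf{Case $\max_V h_2 < 0$.} Here I expect the equation genuinely to have solutions, so the degree should be $1$. The natural strategy is to homotope $h_1$ down: set $h_1^t(x) = t\, h_1(x)$ for $t \in [0,1]$, giving $\mathcal{F}_t(u) = -\Delta u + t\,h_1(x)e^{Au} + h_2(x)e^{-Bu}$. I would first establish a uniform a priori bound: since $h_2 < 0$ and $h_1 \geq 0$, integrating $\mathcal{F}_t(u)=0$ over $V$ controls $\int h_2 e^{-Bu}\,d\mu$ and $t\int h_1 e^{Au}\,d\mu$, and then a maximum-principle argument at the vertices where $u$ attains its max and min (using $\Delta u \geq 0$ at a max, $\leq 0$ at a min) yields $\|u\|_\infty \leq K$ independent of $t$ — this is exactly the kind of estimate Lemma~\ref{priori1} should furnish, so I would quote or adapt it. Hence $\deg(\mathcal{F}_t, B_R, 0)$ is constant in $t$, and it suffices to compute $\deg(\mathcal{F}_0, B_R, 0)$ for $\mathcal{F}_0(u) = -\Delta u + h_2(x)e^{-Bu}$. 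For this reduced map I would argue directly: $\mathcal{F}_0$ is the (negative) gradient, or at least the natural variational map, of a strictly convex and coercive functional (because $-h_2 > 0$ makes $\int (-h_2) e^{-Bu}\,d\mu$ convex and the term $\frac{1}{2}\int|\nabla u|^2$ plus this term is coercive once the constant-function direction is handled by the exponential), so it has a unique zero and the degree is $+1$; alternatively, a further linear homotopy $(1-s)\mathcal{F}_0(u) + s(-\Delta u + u - c)$ toward an invertible affine map whose linearization has positive determinant gives the same conclusion.

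\textbf{Case $\min_V h_2 > 0$.} Now both $h_1 e^{Au}$ and $h_2 e^{-Bu}$ are strictly positive, so $\mathcal{F}_{h_1,h_2}(u)(x) > -\Delta u(x)$ at every vertex; summing over $V$ gives $\int \mathcal{F}_{h_1,h_2}(u)\,d\mu = \int h_1 e^{Au}\,d\mu + \int h_2 e^{-Bu}\,d\mu > 0$, so $\mathcal{F}_{h_1,h_2}$ never vanishes — the equation has no solution. This already forces the degree to be $0$ by the solution (Kronecker existence) property, provided the degree is well-defined, which Lemma~\ref{priori1} guarantees via the a priori bound. So in this case the work is just to record that strict positivity of both nonlinear terms rules out zeros, and invoke the existence property of the Brouwer degree in the contrapositive direction.

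\textbf{Main obstacle.} The delicate point is the uniform a priori bound along the homotopy in the first case: one must ensure that scaling $h_1$ by $t$ down to $0$ does not let solutions escape to infinity (in the $+\infty$ direction $u \to +\infty$ is controlled by $h_2 e^{-Bu}$ staying summable forcing $u$ bounded below, and in the $u \to -\infty$ direction it is the $e^{-Bu}$ blow-up that must be balanced — precisely where $h_2<0$ is used). I would handle this by the two-sided maximum principle estimate sketched above, checking carefully that the bound $K$ can be taken independent of $t\in[0,1]$; everything else (homotopy invariance, the degree of the convex reduced map, the non-existence argument) is then routine.
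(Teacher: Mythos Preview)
Your Case 2 argument (integrate, observe the left side is strictly positive, conclude there are no zeros, hence degree $0$) is correct and is exactly what the paper does.

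Case 1, however, has a genuine gap: the homotopy $h_1^t = t\,h_1$ does \emph{not} work. Look at the endpoint $t=0$. The equation $\mathcal{F}_0(u)=-\Delta u + h_2(x)e^{-Bu}=0$ has \emph{no} solutions: integrating over $V$ gives $\int_V h_2 e^{-Bu}\,d\mu=0$, but $h_2<0$ everywhere forces this integral to be strictly negative. So $\deg(\mathcal{F}_0,B_R,0)=0$, not $1$. Your convexity/coercivity claim for the associated functional $J_0(u)=\tfrac12\int|\nabla u|^2+\tfrac1B\int(-h_2)e^{-Bu}$ fails precisely in the constant direction you flagged: along constants $u\equiv c\to+\infty$ one has $J_0\to 0$ and the infimum is not attained.

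The failure already shows up in the a priori bound. Lemma~\ref{priori1} at the minimum point gives $u_{\min}\geq \frac{1}{A+B}\log\bigl(-h_2(x_0)/(t\,h_1(x_0))\bigr)$, and the analogous estimate at the maximum gives an upper bound of the same form; both blow up to $+\infty$ as $t\to 0^+$. So solutions escape to infinity along the homotopy and the degree is not preserved. The lemma genuinely needs $h_1$ bounded \emph{away from zero}, not merely $h_1\geq 0$.

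The fix is to choose a homotopy that keeps both coefficients strictly signed throughout. The paper deforms $(h_1,h_2)$ linearly to the constant pair $(\epsilon,-\epsilon)$ for small $\epsilon>0$; Lemma~\ref{priori1} then applies uniformly. At the endpoint one shows (using the elliptic estimate of Lemma~\ref{ellestimate} and smallness of $\epsilon$) that $u\equiv 0$ is the \emph{unique} zero of $-\Delta u+\epsilon e^{Au}-\epsilon e^{-Bu}$, and computes the sign of $\det\bigl(-\Delta+(A+B)\epsilon I\bigr)=+1$ directly. You could equally well homotope to any fixed pair of positive/negative constants, but scaling $h_1$ all the way to zero is fatal.
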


Hence, Theorem \ref{deg:T1} and the Kronecker existence show that the Tzitz\'eica equation has at least one solution if $\max_V h_2(x)<0$. From the  process of proving Lemma \ref{thm1part2}, we have that if $\max_V h_2(x)>0$, then the Tzitz\'eica equation has no solutions.

Further, we consider the following generalized Tzitz\'eica equation:
    \begin{equation}\label{eq:T2}
        -\Delta u + h_1(x)e^{Au}(e^{Au}-1)+h_2(x)e^{-Bu}(e^{-Bu}-1)=0.
     \end{equation}
It also can be seen as a generalization of the Chern-Simons Higgs model, which has been studied in \cite{LSY24}. We consider the map $\mathcal{G}_{h_1, h_2}:L^{\infty}(V)\rightarrow L^{\infty}(V)$ defined by
\begin{equation}\label{deg2}
    \mathcal{G}_{h_1, h_2}(u)= -\Delta u + h_1(x)e^{Au}(e^{Au}-1) + h_2(x)e^{-Bu}(e^{-Bu}-1)
\end{equation}
Similar as before, we conclude by Lemma \ref{priori2} that the Brouwer degree $\text{deg}(\mathcal{G}_{h_1, h_2}, B_R, 0)$ is well defined for $R>L$ large. Then by the homotopic invariance, $\text{deg}(\mathcal{G}_{h_1, h_2}, B_R, 0)$ is independent of $R$. We define the topological degree by $$\mathbf{\tilde{d}}_{h_1, h_2}=\lim_{R\rightarrow+\infty}\text{deg}(\mathcal{G}_{h_1, h_2}, B_R, 0).$$

As for the topological degree $\mathbf{\tilde{d}}_{h_1, h_2}$, we have the following results.

\begin{theorem}\label{deg:T2}
    Let $h_1(x)>0$, $h_2(x)>0$ be two given functions on $V$, and $A$, $B>0$ be two constants. Then $$
    \mathbf{\tilde{d}}_{h_1, h_2} =0.
$$
\end{theorem}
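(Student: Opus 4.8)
The plan is to show that the topological degree $\mathbf{\tilde d}_{h_1,h_2}$ equals the degree of a simpler, homotopically equivalent map, and that the latter has no zeros at all inside any ball, forcing the degree to be $0$. Concretely, I would exploit the fact that $h_1, h_2 > 0$ and use a homotopy that deforms $\mathcal{G}_{h_1,h_2}$ toward a map whose zero set is empty. A natural first attempt is the homotopy
\begin{equation*}
H_t(u) = -\Delta u + h_1(x)e^{Au}(e^{Au}-1) + h_2(x)e^{-Bu}(e^{-Bu}-1) + t\,\Phi(x),
\end{equation*}
for a suitable positive function $\Phi$ (for instance $\Phi \equiv 1$), and then argue via a uniform a priori estimate (the analogue of Lemma~\ref{priori2} applied along the homotopy) that no solution of $H_t(u)=0$ can escape a fixed large ball $B_R$ as $t$ ranges over $[0,1]$. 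The key analytic input is an integral identity: integrating $\mathcal{G}_{h_1,h_2}(u) = 0$ (or $H_t(u)=0$) over $V$ kills the Laplacian term, leaving
\begin{equation*}
\int_V h_1 e^{Au}(e^{Au}-1)\, d\mu + \int_V h_2 e^{-Bu}(e^{-Bu}-1)\, d\mu = -t\,\int_V \Phi\, d\mu.
\end{equation*}

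The main structural observation is that the functions $s \mapsto e^{As}(e^{As}-1)$ and $s \mapsto e^{-Bs}(e^{-Bs}-1)$ are both bounded below (their minimum is $-1/4$, attained where the respective exponential equals $1/2$). Hence the left-hand side above is bounded below by $-\tfrac14\big(\int_V h_1\, d\mu + \int_V h_2\, d\mu\big)$, a fixed constant. Choosing $\Phi$ with $t\int_V\Phi\,d\mu$ strictly exceeding $\tfrac14(\int_V h_1 + \int_V h_2)\,d\mu$ for $t$ near $1$ — equivalently, rescaling $\Phi$ to be large enough — makes the identity unsatisfiable, so $H_1$ has no zeros in $L^\infty(V)$ and $\mathrm{deg}(H_1, B_R, 0) = 0$. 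The homotopy invariance of the Brouwer degree then gives $\mathbf{\tilde d}_{h_1,h_2} = \mathrm{deg}(\mathcal{G}_{h_1,h_2}, B_R, 0) = \mathrm{deg}(H_1, B_R, 0) = 0$, provided the no-escape-of-solutions estimate holds uniformly in $t$.

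The hard part will be establishing that uniform a priori bound along the homotopy: one needs to rule out that solutions $u_t$ of $H_t(u)=0$ blow up in $L^\infty$ as $t \to t_0$ for some $t_0 \in [0,1]$, and this requires controlling both $\max_V u$ and $\min_V u$. I expect the argument to parallel the proof of Lemma~\ref{priori2}: evaluate the equation at a vertex where $u$ attains its maximum (where $-\Delta u \ge 0$) and at a vertex where it attains its minimum (where $-\Delta u \le 0$), and combine these pointwise inequalities with the integral identity above to trap $u$ in a fixed interval independent of $t$. The positivity of $h_1$ and $h_2$ and the coercive growth of $e^{Au}(e^{Au}-1)$ as $u \to +\infty$ (resp. of $e^{-Bu}(e^{-Bu}-1)$ as $u \to -\infty$) are what make this possible. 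Once the a priori bound is in hand, the degree computation is immediate, so essentially all the work is in adapting the existing a priori estimate to include the $t\Phi$ perturbation term.
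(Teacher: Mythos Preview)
Your approach is correct but differs from the paper's. The paper does not perturb by an additive constant $t\Phi$; instead it deforms the nonlinearity itself via
\[
\tilde{\mathcal{G}}(u,t) = -\Delta u + h_1(x)e^{Au}(e^{Au}-t) + h_2(x)e^{-Bu}(e^{-Bu}-t),\qquad t\in[0,1],
\]
so that at $t=1$ one has $\mathcal{G}_{h_1,h_2}$, while at $t=0$ the map becomes $-\Delta u + h_1 e^{2Au} + h_2 e^{-2Bu}$, which is exactly a map of the form $\mathcal{F}_{h_1,h_2}$ (with exponents $2A,2B$). Since $h_1,h_2>0$, Lemma~\ref{thm1part2} applies and gives degree~$0$ immediately. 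The required uniform bound along this homotopy is precisely Lemma~\ref{priori3}, so Theorem~\ref{deg:T2} becomes a one-line corollary of Theorem~\ref{deg:T1}. Your additive perturbation $H_t=\mathcal{G}_{h_1,h_2}+t\Phi$ also works: the pointwise maximum argument and the integral identity go through with the extra $t\Phi$ term (the term has the right sign in both places), and your observation that $s\mapsto e^{As}(e^{As}-1)$ and $s\mapsto e^{-Bs}(e^{-Bs}-1)$ are bounded below by $-1/4$ kills solutions at $t=1$ for $\Phi$ large. The paper's route is a bit cleaner because it recycles Theorem~\ref{deg:T1} and the already-proved a~priori estimate; yours is more self-contained and does not need the earlier degree computation, which could be a virtue if one wanted Theorem~\ref{deg:T2} in isolation.
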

 By taking the variational method and critical groups into account, we  find the multiplicities of solutions of \eqref{eq:T2} under certain conditions.

\begin{theorem}\label{multisolutions}
    Let $h_1(x)>0$, $h_2(x)>0$ be two given functions on $V$, and $A$, $B>0$ be two constants. If $A\max_V h_1(x)<B\min_Vh_2(x)$ or $A\min_V h_1(x)>B\max_V h_2(x)$,  then the equation \eqref{eq:T2} has at least two solutions.
\end{theorem}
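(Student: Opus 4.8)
The plan is to observe first that the constant function $u\equiv 0$ already solves \eqref{eq:T2} for \emph{every} choice of positive $h_1,h_2$ — indeed $e^{A\cdot 0}(e^{A\cdot 0}-1)=e^{-B\cdot 0}(e^{-B\cdot 0}-1)=0$ and $\Delta 0=0$ — so the whole task is to produce one more, nontrivial, solution in each parameter regime. I would work with the energy functional on the finite‑dimensional space of functions on $V$,
\[
J(u)=\tfrac12\int_V|\nabla u|^2\,d\mu+\int_V h_1\Big(\tfrac{e^{2Au}}{2A}-\tfrac{e^{Au}}{A}\Big)\,d\mu+\int_V h_2\Big(\tfrac{e^{-Bu}}{B}-\tfrac{e^{-2Bu}}{2B}\Big)\,d\mu,
\]
whose critical points are exactly the solutions of \eqref{eq:T2}, i.e. $J'(u)=\mathcal{G}_{h_1,h_2}(u)$. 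A direct computation of the Hessian at the trivial solution gives $J''(0)=-\Delta+(Ah_1-Bh_2)$.

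In the regime $A\min_V h_1>B\max_V h_2$ the potential $Ah_1-Bh_2$ is positive on $V$, so $J''(0)=-\Delta+(Ah_1-Bh_2)$ is positive definite; hence $u\equiv 0$ is a nondegenerate local minimum of $J$, an isolated solution of \eqref{eq:T2}, and its Brouwer index is $\mathrm{ind}(\mathcal{G}_{h_1,h_2},0)=\mathrm{sgn}\det J''(0)=1$. If $0$ were the only solution, then by Lemma \ref{priori2} all solutions would lie in $B_R$ for $R$ large, and the excision and additivity properties of the degree would force $\mathbf{\tilde d}_{h_1,h_2}=\mathrm{ind}(\mathcal{G}_{h_1,h_2},0)=1$, contradicting Theorem \ref{deg:T2}. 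Therefore \eqref{eq:T2} has at least two solutions here. (Equivalently: since $J(u)\to-\infty$ along constant functions tending to $-\infty$ and $0$ is a strict local minimum, the mountain pass theorem — applicable because $J$ satisfies the Palais--Smale condition on a finite‑dimensional space — yields a second critical point at a strictly higher energy level.)

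In the regime $A\max_V h_1<B\min_V h_2$ the Hessian $J''(0)=-\Delta+(Ah_1-Bh_2)$ has a negative potential, hence is indefinite and possibly degenerate; the local degree of $\mathcal{G}_{h_1,h_2}$ at $0$ may then vanish, and $J$ is unbounded below, so neither the degree bookkeeping at $0$ nor a global minimization produces a second solution directly. Instead I would use ordered sub/supersolutions. A small constant $\underline u\equiv\varepsilon>0$ is a strict subsolution, since $\mathcal{G}_{h_1,h_2}(\varepsilon)=h_1(e^{2A\varepsilon}-e^{A\varepsilon})+h_2(e^{-2B\varepsilon}-e^{-B\varepsilon})=\varepsilon\,(Ah_1-Bh_2)+O(\varepsilon^2)<0$ on $V$ once $\varepsilon$ is small, using $Ah_1-Bh_2\le A\max_V h_1-B\min_V h_2<0$ and the finiteness of $V$; and a large constant $\overline u\equiv M>\varepsilon$ is a supersolution, since $\mathcal{G}_{h_1,h_2}(M)\ge(\min_V h_1)(e^{2AM}-e^{AM})-\max_V h_2>0$ for $M$ large. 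The monotone iteration scheme on the finite graph — rewriting \eqref{eq:T2} as $(-\Delta+K)u=Ku-h_1 e^{Au}(e^{Au}-1)-h_2 e^{-Bu}(e^{-Bu}-1)$ with $K$ large enough that the right‑hand side is nondecreasing in $u$ on $[\varepsilon,M]$ and using that $(-\Delta+K)^{-1}$ preserves positivity — then converges to a solution $u_1$ with $\varepsilon\le u_1\le M$, so $u_1\not\equiv 0$. Together with $u\equiv 0$ this gives at least two solutions.

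The main obstacle is precisely this second regime: the trivial solution is a degenerate, indefinite critical point whose critical groups (hence local degree) may carry no information, so one genuinely needs an independent construction of a positive solution — either the barrier/monotone‑iteration argument above, or its variational counterpart of minimizing $J$ over the compact order interval $\{\varepsilon\le v\le M\}$ and checking the minimizer is interior. Verifying the strict sub/supersolution inequalities uniformly on $V$ and the convergence of the iteration (or the interior‑minimizer claim) is the only real work there; the regime $A\min_V h_1>B\max_V h_2$ is essentially immediate from Theorem \ref{deg:T2} and Lemma \ref{priori2}.
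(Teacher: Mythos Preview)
Your proof is correct, and your key observation---that $u\equiv 0$ is always a solution of \eqref{eq:T2}---is sharper than what the paper does. The paper proceeds uniformly in both cases: it minimizes the functional $J$ over the closed order interval $\{\delta\le u\le\beta\}$ (with $\delta>0$ in the regime $A\max_V h_1<B\min_V h_2$, and the mirror interval on the negative side in the other regime), checks the minimizer $\tilde u$ is interior so that $\tilde u$ is a genuine local minimum of $J$, and then---assuming for contradiction that $\tilde u$ is the \emph{only} critical point---computes the critical group $\mathsf{C}_q(J,\tilde u)=\delta_{q0}\mathbf{G}$ and invokes the Morse-theoretic identity $\deg(DJ,B_R,0)=\sum_q(-1)^q\,\mathrm{rank}\,\mathsf{C}_q(J,\tilde u)\ne 0$, contradicting Theorem~\ref{deg:T2}.

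Your route bypasses the critical group machinery entirely. In the regime $A\min_V h_1>B\max_V h_2$ you exploit that $J''(0)=-\Delta+(Ah_1-Bh_2)$ is positive definite, so the local index at $0$ is $+1$ and Theorem~\ref{deg:T2} forces a second zero; in the other regime your sub/supersolution construction (constants $\varepsilon$ and $M$) is essentially the same barrier the paper uses to trap its variational minimizer, and the resulting solution is strictly positive, hence distinct from $0$. Either way you avoid singular homology. The trade-off is that your argument treats the two cases asymmetrically, while the paper's is uniform; but your observation in fact shows the paper's critical group step is superfluous: since the paper's $\tilde u$ satisfies $\tilde u>\delta>0$ (or $\tilde u<-\delta<0$), it is already distinct from the trivial solution $u\equiv 0$, so the Lemma alone yields two solutions. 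One small point: your parenthetical ``$J$ satisfies the Palais--Smale condition on a finite-dimensional space'' still needs the boundedness of PS sequences, which is exactly the content of Lemma~\ref{priori2}/Lemma~\ref{priori3}; you use this in the degree argument anyway, so it is not a gap.
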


\section{Proof of Theorem \ref{deg:T1}}
Firstly, we give the following elliptic estimate.
\begin{lemma}\label{ellestimate}
    Let $G=(V,E)$ be a finite connected simple graph, then there exists a positive constant $C$ such that for all $u \in L^{\infty}(V)$, $$\max_Vu-\min_V u\leq C\|\Delta u\|_{L^\infty(V)}.$$
\end{lemma}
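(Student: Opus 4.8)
The plan is to reduce the inequality to the equivalence of norms on a finite-dimensional space, so that once two quantities are checked to be genuine norms the estimate is automatic. Observe first that both $u\mapsto \max_V u-\min_V u$ and $u\mapsto \|\Delta u\|_{L^\infty(V)}$ are unchanged when a constant is added to $u$ and vanish on constants; hence both descend to the quotient space $X:=L^\infty(V)/\mathbb{R}\mathbf 1$, which has dimension $|V|-1<\infty$. Write $\|[u]\|_1:=\max_V u-\min_V u$ and $\|[u]\|_2:=\|\Delta u\|_{L^\infty(V)}$ for the induced functionals on $X$.

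Next I would verify that $\|\cdot\|_1$ is a norm on $X$: it is positively homogeneous (treating $\lambda\ge 0$ and $\lambda<0$ separately, using $\max_V(-u)=-\min_V u$), it is subadditive because $\max_V(u+v)\le \max_V u+\max_V v$ and $\min_V(u+v)\ge \min_V u+\min_V v$, and it vanishes exactly when $u$ is constant, i.e.\ only at $[u]=0$. Similarly $\|\cdot\|_2$ is a norm on $X$: as the composition of the linear operator $\Delta$ with $\|\cdot\|_{L^\infty(V)}$ it is a seminorm, and it is positive-definite precisely because $G$ is connected: if $\Delta u\equiv 0$ then, at a vertex $x$ realizing $\max_V u$, one has $\Delta u(x)=\frac1{\mu(x)}\sum_{y\sim x}w_{xy}(u(y)-u(x))\le 0$ with equality forcing $u(y)=u(x)$ for every neighbour $y$; propagating this along paths shows $u$ is constant. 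This discrete maximum principle is the only place connectedness is used.

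Finally, since $X$ is finite-dimensional all norms on it are equivalent, so there is a constant $C=C(G)>0$ with $\|[u]\|_1\le C\|[u]\|_2$ for every $[u]\in X$, which is exactly $\max_V u-\min_V u\le C\|\Delta u\|_{L^\infty(V)}$ for all $u\in L^\infty(V)$; note $C$ depends only on the combinatorial data $(V,E,\mu,\{w_{xy}\})$, as required.

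I do not expect a real obstacle here; the only point that needs genuine care is the positive-definiteness of $\|\cdot\|_2$, i.e.\ the maximum principle on a connected graph, and making sure the resulting constant is graph-dependent only. If one prefers an explicit argument, an equivalent route is to use the Green's operator $\mathcal G$ of $\Delta$ on mean-zero functions: writing $u-\overline u=\mathcal G(\Delta u)$ gives $\max_V u-\min_V u\le 2\|u-\overline u\|_{L^\infty(V)}\le 2\|\mathcal G\|\,\|\Delta u\|_{L^\infty(V)}$, with $C=2\|\mathcal G\|$.
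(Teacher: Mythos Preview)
Your argument is correct and takes a genuinely different route from the paper. The paper proves the lemma constructively: it joins a minimizing vertex $x_1$ to a maximizing vertex $x_\ell$ by a shortest path, telescopes and applies Cauchy--Schwarz to bound $\max_V u-\min_V u$ by a multiple of $\bigl(\int_V|\nabla u|^2\,d\mu\bigr)^{1/2}$, and then controls the Dirichlet energy by $\|\Delta u\|_{L^\infty(V)}$ via integration by parts together with the Poincar\'e inequality $\int_V(u-\overline u)^2\,d\mu\le\lambda_1^{-1}\int_V|\nabla u|^2\,d\mu$. This produces the explicit constant $C=\sqrt{(\ell-1)\,\mathrm{Vol}(G)/(w_0\lambda_1)}$, with $\ell-1$ bounded by the graph diameter. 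Your quotient-and-equivalence-of-norms argument is shorter and isolates precisely what is needed---connectedness enters only through the discrete maximum principle giving $\ker\Delta=\mathbb R\mathbf 1$---but it is non-constructive and yields no expression for $C$ in terms of $w_0$, $\lambda_1$, $\mathrm{Vol}(G)$, or the diameter. Since the explicit form of $C$ is never used later in the paper, either approach suffices here; the Green's-operator variant you sketch at the end is essentially the same soft step, with $2\|\mathcal G\|$ playing the role of the abstract equivalence constant.
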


\begin{proof}
    Without loss of generality, we assume that  $u(x_1)=\min_Vu$, $u(x_{\ell})=\max_Vu$ and $x_1x_2$, $x_2x_3$, $\cdots$, $x_{\ell-1}x_\ell$ is the shortest path connecting $x_1$ and $x_\ell$. It follows from Cauchy inequality that
 \begin{eqnarray}\nonumber
0\leq u(x_\ell)-u(x_1)&\leq&\sum_{j=1}^{\ell-1}|u(x_{j+1})-u(x_{j})|\\\nonumber
&\leq&\frac{\sqrt{\ell-1}}{\sqrt{w_0}}\left(\sum_{j=1}^{\ell-1}w_{x_{j+1}x_j}(u(x_{j+1})-u(x_j))^2\right)^{1/2}\\\label{ineq-0}
&\leq&\frac{\sqrt{\ell-1}}{\sqrt{w_0}}\left(\int_V|\nabla u|^2d\mu\right)^{1/2},
 \end{eqnarray}
where $w_0=\min_{x\in V,\,y\sim x} w_{xy}>0$. Denoting $\overline{u}=\frac{1}{|V|}\int_Vud\mu$, we obtain by integration by parts
\begin{eqnarray*}
\int_V|\nabla u|^2d\mu&=&-\int_Vu\Delta ud\mu\\
&=&-\int_V(u-\overline{u})\Delta ud\mu\\
&\leq&\left(\int_V(u-\overline{u})^2d\mu\right)^{1/2}\left(\int_V(\Delta u)^2d\mu\right)^{1/2}\\
&\leq&\left(\frac{1}{\lambda_1}\int_V|\nabla u|^2d\mu\right)^{1/2}\left(\int_V(\Delta u)^2d\mu\right)^{1/2},
\end{eqnarray*}
which gives
\begin{align}\label{ineq-1}\int_V|\nabla u|^2d\mu\leq \frac{1}{\lambda_1}\int_V(\Delta u)^2d\mu\leq \frac{1}{\lambda_1}
\|\Delta u\|_{L^\infty(V)}^2\text{Vol}(G),\end{align}
where $\lambda_1=\inf_{\overline{v}=0,\int_Vv^2d\mu=1}\int_V|\nabla v|^2d\mu>0$. Combining (\ref{ineq-0}) and (\ref{ineq-1}), we conclude
\begin{align}\label{equiv}
  \max_Vu-\min_V u\leq \sqrt{\frac{(\ell-1)\text{Vol}(G)}{w_0\lambda_1}}\|\Delta u\|_{L^\infty(V)}:=C\|\Delta u\|_{L^\infty(V)},
\end{align} as desired.
\end{proof}

Secondly, we give the following priori estimate for equation \eqref{eq:T1}.
\begin{lemma}\label{priori1}
    Let \( u \) be a solution of equation \eqref{eq:T1}, where \( A \) and \( B \) are positive constants, \( h_1(x) > 0 \) and $h_2(x) < 0$ are two functions on $V$. Then there exists a constant \( K \), depending only on \( A \), \( B \), \( h_1 \), \( h_2 \), and the graph \( G \), such that \( |u(x)| \leq K \) for all \( x \in V \).
\end{lemma}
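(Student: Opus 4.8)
The plan is to obtain the estimate directly from the discrete maximum principle applied to \eqref{eq:T1}; in fact Lemma \ref{ellestimate} is not needed for this particular bound, and the constant $K$ produced will depend only on $A$, $B$ and the extrema over $V$ of $h_1$ and of $-h_2$.

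First I would choose a vertex $x_+\in V$ at which $u$ attains its maximum. Since every neighbour $y\sim x_+$ satisfies $u(y)\le u(x_+)$, the definition of $\Delta$ gives $\Delta u(x_+)\le 0$. Evaluating \eqref{eq:T1} at $x_+$ and rearranging,
\[
h_1(x_+)\,e^{Au(x_+)}+h_2(x_+)\,e^{-Bu(x_+)}=\Delta u(x_+)\le 0 ,
\]
so, using $h_1(x_+)>0$ and $h_2(x_+)<0$,
\[
e^{(A+B)u(x_+)}\le \frac{-h_2(x_+)}{h_1(x_+)}\le \frac{\max_V(-h_2)}{\min_V h_1},
\]
which bounds $\max_V u=u(x_+)$ from above. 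Symmetrically, at a vertex $x_-$ where $u$ attains its minimum one has $\Delta u(x_-)\ge 0$, and the same manipulation gives
\[
e^{(A+B)u(x_-)}\ge \frac{-h_2(x_-)}{h_1(x_-)}\ge \frac{\min_V(-h_2)}{\max_V h_1}>0,
\]
bounding $\min_V u=u(x_-)$ from below. Taking logarithms in both displays and letting $K$ be the larger of the resulting two absolute values then yields $|u(x)|\le K$ for every $x\in V$.

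The argument is essentially obstruction-free; the one point that must be checked is that the logarithms above are legitimate, i.e. that $\min_V(-h_2)>0$ and $\min_V h_1>0$, which is exactly where the hypotheses $h_1>0$ and $h_2<0$ on all of $V$, together with $|V|<\infty$, are used. I note that one could instead try to route the proof through Lemma \ref{ellestimate} by estimating $\max_V u-\min_V u\le C\,\|h_1e^{Au}+h_2e^{-Bu}\|_{L^\infty(V)}$, but the right-hand side still involves $u$, so this would be circular without a prior pointwise bound; the maximum-principle argument above avoids this entirely and is the route I would take.
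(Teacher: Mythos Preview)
Your proof is correct and is essentially the same maximum-principle argument the paper gives: evaluate \eqref{eq:T1} at the extremal vertices, use the sign of $\Delta u$ there, and solve the resulting inequality $h_1 e^{Au}\lessgtr(-h_2)e^{-Bu}$ for $e^{(A+B)u}$ to obtain explicit upper and lower bounds. Your remark that Lemma~\ref{ellestimate} is neither needed nor helpful here is also in line with the paper, which does not invoke it for this lemma.
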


\begin{proof}
    Let $x_0 \in V$ be a vertex such that $u(x_0)=\min_V u(x) $. Then 
    \begin{align*}
        -\Delta u(x_0)=\frac{1}{\mu_{x_0}} \sum_{y \in V} w_{x_0 y}(u(x_0)-u(y)) \leq0.
    \end{align*}
     Hence, we have $$h_1(x_0)e^{Au(x_0)}+h_2(x_0)e^{-Bu(x_0)}\geq 0.$$
    That is, $$u(x_0) \geq \frac{1}{A+B} \log \frac{-h_2(x_0)}{h_1(x_0)} \geq \frac{1}{A+B} \log \frac{-\max_V h_2(x)}{\max_V h_1(x)} := C_1.$$
    Similarly, for a vertex $x_1 \in V$  such that $u(x_1)=\max_V u(x) $, we have $$u(x_1)\leq C_2 :=\frac{1}{A+B} \log \frac{-\min_V h_2(x)}{\min_V h_1(x)}.$$ We obtain the desired estimate.
\end{proof}

\begin{lemma}\label{thm1part1}
    In the case of \(h_1(x) > 0 \) and \(h_2(x) < 0 \), we have $\mathbf{d}_{h_1, h_2}=1$.
\end{lemma}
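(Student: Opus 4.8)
The plan is to compute the degree by a homotopy that contracts the nonlinearity onto a linear operator while keeping the a priori estimate of Lemma~\ref{priori1} valid \emph{uniformly} along the deformation. Since $V$ is finite, $L^\infty(V)\cong\mathbb{R}^{|V|}$ and $\mathrm{deg}(\mathcal{F}_{h_1,h_2},B_R,0)$ is an honest Brouwer degree. Write $g_x(s)=h_1(x)e^{As}+h_2(x)e^{-Bs}$, so that $\mathcal{F}_{h_1,h_2}(u)(x)=-\Delta u(x)+g_x(u(x))$. Because $h_1(x)>0$, $h_2(x)<0$ and $A,B>0$, each $g_x$ is strictly increasing on $\mathbb{R}$ (indeed $g_x'(s)=Ah_1(x)e^{As}-Bh_2(x)e^{-Bs}>0$) and has a unique zero $p(x)=\frac{1}{A+B}\log\frac{-h_2(x)}{h_1(x)}$. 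I would then define, for $t\in[0,1]$,
\[
H(t,u)(x)\;=\;-\Delta u(x)\;+\;t\,g_x\bigl(u(x)\bigr)\;+\;(1-t)\bigl(u(x)-p(x)\bigr),
\]
so that $H(1,\cdot)=\mathcal{F}_{h_1,h_2}$ and $H(0,u)=(-\Delta+I)u-p$.

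The heart of the argument is a $t$-uniform a priori bound, obtained by the maximum-principle reasoning of Lemma~\ref{priori1}. If $H(t,u)=0$, then at a minimum vertex $x_0$ of $u$ we have $-\Delta u(x_0)\le 0$, hence $\phi_t(u(x_0))\ge 0$, where $\phi_t(s)=t\,g_{x_0}(s)+(1-t)(s-p(x_0))$; for every $t\in[0,1]$ the function $\phi_t$ is strictly increasing (for $t\in(0,1)$ it is a positive combination of the two strictly increasing functions $g_{x_0}$ and $s\mapsto s-p(x_0)$; for $t=0,1$ it is one of them) and it vanishes at $s=p(x_0)$, so $u(x_0)\ge p(x_0)$, i.e. $\min_V u\ge\min_V p$. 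The symmetric argument at a maximum vertex gives $\max_V u\le\max_V p$. Thus every zero of $H(t,\cdot)$ lies in the fixed ball $B_{R_0}$ with $R_0:=\|p\|_\infty+1$, independently of $t$. Note that the hypothesis $h_2<0$ is used essentially here: it is exactly what makes the zeroth-order term monotone, hence the homotopy admissible; anchoring the linear term at the \emph{same} zero $p(x)$ as $g_x$ is what makes the intermediate target equal to $p$ for every $t$ and the estimate completely uniform.

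Finally, for any $R>R_0$, homotopy invariance of the Brouwer degree gives $\mathrm{deg}(\mathcal{F}_{h_1,h_2},B_R,0)=\mathrm{deg}(H(0,\cdot),B_R,0)$. The map $H(0,\cdot)$ is the affine map $u\mapsto Lu-p$ with $L=-\Delta+I$, which is self-adjoint and positive definite for the measure inner product $\langle f,g\rangle_\mu=\sum_{x\in V}\mu(x)f(x)g(x)$, since $\langle Lu,u\rangle_\mu=\int_V|\nabla u|^2\,d\mu+\int_V u^2\,d\mu>0$ for $u\neq 0$ (by the integration-by-parts identity already used in Lemma~\ref{ellestimate}). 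In particular $L$ is invertible, its unique zero $u^\ast=L^{-1}p$ satisfies $\|u^\ast\|_\infty\le\|p\|_\infty<R$ (by the maximum principle for $I-\Delta$), and $\det L>0$; hence $\mathrm{deg}(H(0,\cdot),B_R,0)=\mathrm{sgn}\,\det L=1$. Letting $R\to\infty$ yields $\mathbf{d}_{h_1,h_2}=1$.

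I expect the only genuine subtlety to be the choice of homotopy: one must deform the nonlinearity without ever losing the two-sided bound, which rules out the naive idea of switching off either exponential (killing $h_1e^{Au}$ destroys the upper bound, killing $h_2e^{-Bu}$ destroys the lower bound). As an independent cross-check of the value $1$, observe that the Jacobian $D\mathcal{F}_{h_1,h_2}(u)=-\Delta+\mathrm{diag}\!\bigl(g_x'(u(x))\bigr)$ is positive definite at every $u$ (again because $h_2<0$), so $0$ is automatically a regular value and the degree equals the number of solutions; and the associated functional $J(u)=\frac12\int_V|\nabla u|^2\,d\mu+\int_V\frac{h_1}{A}e^{Au}\,d\mu-\int_V\frac{h_2}{B}e^{-Bu}\,d\mu$ is strictly convex and coercive, so it has exactly one critical point. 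Both facts together reconfirm that \eqref{eq:T1} has a unique solution and that $\mathbf{d}_{h_1,h_2}=1$.
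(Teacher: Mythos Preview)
Your proof is correct and takes a genuinely different route from the paper's. The paper stays inside the Tzitz\'eica family, homotoping the coefficients $(h_1,h_2)\rightsquigarrow(\epsilon,-\epsilon)$; at the endpoint it must still argue that $u\equiv 0$ is the \emph{unique} solution of $-\Delta u+\epsilon e^{Au}-\epsilon e^{-Bu}=0$, which it does via the elliptic estimate of Lemma~\ref{ellestimate} together with a smallness-of-$\epsilon$ trick, before reading off $\mathrm{sgn}\,\det(-\Delta+(A+B)\epsilon I)=1$. You instead exploit directly that each $g_x$ is strictly increasing with zero $p(x)$ and interpolate $g_x(s)$ with the linear function $s-p(x)$ sharing that zero, landing on the affine map $(-\Delta+I)u-p$. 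This buys you a much cleaner endpoint (no uniqueness lemma, no Lemma~\ref{ellestimate}, no small parameter), and the $t$-uniform bound is immediate from the same maximum-principle step as in Lemma~\ref{priori1} precisely because both pieces of $\phi_t$ vanish at $p(x_0)$. Your convexity/coercivity cross-check is an independent confirmation the paper does not provide. The only point worth making explicit is why positive definiteness of $L=-\Delta+I$ in $\langle\cdot,\cdot\rangle_\mu$ forces $\det L>0$ for the Jacobian in the standard coordinates: $D_\mu L$ (with $D_\mu=\mathrm{diag}(\mu(x))$) is symmetric positive definite, so $\det L=\det(D_\mu L)/\det D_\mu>0$; you use this implicitly, and it is correct.
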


\begin{proof}
We define a smooth map $\tilde{\mathcal{F}}: L^{\infty}(V)\times[0,1]\rightarrow L^{\infty}(V)$ by $$\tilde{\mathcal{F}}(u,t)= -\Delta u + [t\epsilon+(1-t)h_1(x)]e^{Au} + [-t\epsilon+(1-t)h_2(x)]e^{-Bu}.$$
    where $\epsilon>0$ is sufficiently small such that $t\epsilon+(1-t)h_1(x)>0$ and $-t\epsilon+(1-t)h_2(x)<0$. Note that $\tilde{\mathcal{F}}(u,0)=\mathcal{F}_{h_1, h_2}.$ Applying Lemma \ref{priori1}, we deduce that the solutions of $\tilde{\mathcal{F}}(u,t)=0$ are uniformly bounded with respect to $t$. We consider the  equation
    \begin{equation}\label{defor-u1}
        \tilde{\mathcal{F}}(u,1)=-\Delta u + \epsilon e^{Au}-\epsilon e^{-Bu}=0.
    \end{equation}
By homotopic invariance, the equation \eqref{eq:T1} and the equation \eqref{defor-u1} have the same Brouwer degree. Thus, it is sufficient to calculate the Brouwer degree of the equation \eqref{defor-u1}.

We claim that $u_1 \equiv 0$ is the unique solution of equation \eqref{defor-u1}. Suppose $v_1 \not\equiv 0$ is  another solution of equation \eqref{defor-u1}, then $$-\Delta (u_1-v_1) + \epsilon (e^{Au_1}-e^{Av_1})-\epsilon (e^{-Bu_1}-e^{-Bv_1})=0,$$ which implies that
\begin{align*}
\Delta (u_1-v_1)|\leq& \epsilon |e^{Au_1}-e^{Av_1}|+\epsilon |e^{-Bu_1}-e^{-Bv_1}|\\ \leq& \epsilon(A+B)|u_1-v_1|\cdot \max\{e^{Au_1},e^{Av_1},e^{-Bu_1},e^{-Bv_1}\}\\ :=&
C_3\epsilon|u_1-v_1|,
\end{align*}
here we have used Lemma \ref{priori1}.
Together  with  Lemma \ref{ellestimate}, we derive
\begin{align*}
    \max_V(u_1-v_1)-\min_V(u_1-v_1)&\leq CC_3\epsilon \max_V|u_1-v_1|\\ &:=C_4\epsilon \max_V|u_1-v_1|\\ &\leq C_4\epsilon\left(\max_V(u_1-v_1)-\min_V(u_1-v_1)+|\min_V(u_1-v_1)|\right).
\end{align*}
Thus for sufficient small $\epsilon>0$, we have
$$\max_V(u_1-v_1)\leq \min_V(u_1-v_1)+\frac{1}{2}|\min_V(u_1-v_1)|.$$
Since $$\int_{V} (e^{Au_1}-e^{-Bu_1})- (e^{Av_1}-e^{-Bv_1}) d\mu =0,$$ it must hold that $$\min_V (u_1-v_1)<0<\max_V (u_1-v_1).$$
Hence, $$0< \max_V (u_1-v_1) \leq \min_V (u_1-v_1)-\frac{1}{2}\min_V (u_1-v_1)=\frac{1}{2}\min_V (u_1-v_1)<0,$$ which is a contradiction. Therefore, the  equation \eqref{defor-u1} has the unique solution $u \equiv 0$.

A straightforward calculation shows that $$\mathbf{d}_{h_1, h_2}=\mathbf{d}_{\epsilon, -\epsilon}=\text{sgn\;det}(D\mathcal{F}_{\epsilon,-\epsilon}(0))=\text{sgn\;det}(-\Delta+A\epsilon I+B\epsilon I)=1,$$ for sufficient small $\epsilon>0$, where we have used the fact that the eigenvalues of $-\Delta$ are non-negative. This completes the proof.
\end{proof}

\begin{lemma}\label{thm1part2}
    In the case of \(h_1(x) > 0 \) and \(h_2(x) > 0 \), we have $\mathbf{d}_{h_1, h_2}=0$.
\end{lemma}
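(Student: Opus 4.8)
The plan is to show that, when $h_1(x)>0$ and $h_2(x)>0$, equation \eqref{eq:T1} has no solution at all on $V$, and then to read off $\mathbf{d}_{h_1, h_2}=0$ directly from the Kronecker existence (solution) property of the Brouwer degree, with no homotopy computation needed.

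First I would establish non-existence. On a finite graph the Laplacian integrates to zero: integrating by parts (equivalently, using the symmetry $w_{xy}=w_{yx}$),
\[
\int_V \Delta u \, d\mu = \sum_{x\in V}\sum_{y\sim x} w_{xy}\bigl(u(y)-u(x)\bigr) = 0
\]
for every $u:V\to\mathbb{R}$. Hence, if $u$ were a solution of \eqref{eq:T1}, integrating the identity $-\Delta u + h_1 e^{Au} + h_2 e^{-Bu}=0$ over $V$ would give
\[
\int_V \bigl(h_1(x)e^{Au(x)} + h_2(x)e^{-Bu(x)}\bigr)\, d\mu = \int_V \Delta u\, d\mu = 0 ,
\]
which is impossible since $h_1,h_2>0$ forces the integrand to be strictly positive everywhere. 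Therefore $\mathcal{F}_{h_1, h_2}(u)\ne 0$ for every $u\in L^{\infty}(V)$.

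Consequently, for every $R>0$ we have $0\notin \mathcal{F}_{h_1, h_2}(\partial B_R)$, so $\text{deg}(\mathcal{F}_{h_1, h_2},B_R,0)$ is well defined; moreover $\mathcal{F}_{h_1, h_2}^{-1}(0)\cap B_R=\emptyset$, so the Kronecker existence property forces $\text{deg}(\mathcal{F}_{h_1, h_2},B_R,0)=0$ for every $R$. Letting $R\to+\infty$ yields $\mathbf{d}_{h_1, h_2}=0$. (If one prefers an explicit deformation, note that $h_1(x)e^{At}+h_2(x)e^{-Bt}\ge c_0>0$ for all $x\in V$ and $t\in\mathbb{R}$, where $c_0$ is the minimum over $x$ of the strictly convex, coercive functions $t\mapsto h_1(x)e^{At}+h_2(x)e^{-Bt}$; then the homotopy $-\Delta u + h_1 e^{Au}+h_2 e^{-Bu} - s$, $s\in[0,c_0/2]$, never vanishes on any $\partial B_R$ by the same integration argument, and the endpoint map still has no zero — but this is superfluous.)

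I do not expect any real obstacle. The only point deserving a word of care is that, unlike the $h_2<0$ case, the well-definedness and $R$-independence of the degree here do not come from an a priori bound (Lemma \ref{priori1} is not available) but from the complete absence of solutions, which makes $0\notin\mathcal{F}_{h_1, h_2}(\partial B_R)$ automatic for all $R$. This same computation also gives the non-existence statement for \eqref{eq:T1} recorded just after Theorem \ref{deg:T1}.
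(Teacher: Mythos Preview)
Your proof is correct and follows essentially the same approach as the paper: integrate \eqref{eq:T1} over $V$ to see that the left-hand side is strictly positive while the right-hand side is zero, hence no solution exists and the degree is $0$. Your version is in fact more careful than the paper's, since you explicitly address why $\deg(\mathcal{F}_{h_1,h_2},B_R,0)$ is well defined for every $R$ in the absence of an a priori estimate like Lemma~\ref{priori1}.
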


\begin{proof}
    Integrating the both side of \eqref{eq:T1}, we deduce that the left side is positive whereas the right side is $0$. Hence the equation \eqref{eq:T1} has no solution. Therefore, we conclude $$\mathbf{d}_{h_1, h_2}=0,$$ as desired.
\end{proof}
Combining Lemmas \ref{thm1part1} and \ref{thm1part2}, we complete the proof of Theorem \ref{deg:T1}.

\section{Proof of Theorem \ref{deg:T2}}

In order to ensure that the Brouwer degree is well-defined and to utilize the homotopy invariance, we prove the following estimate.
\begin{lemma}\label{priori3}
    Let \( u_t \) be a solution of equation \begin{equation}\label{defor:T2}
        -\Delta u_t + h_1(x)e^{Au_t}(e^{Au_t}-t)+h_2(x)e^{-Bu_t}(e^{-Bu_t}-t)=0,
     \end{equation} where $t \in [0,1]$. Then there exists a constant \( L \), depending only on \( A \), \( B \), \( h_1 \), \( h_2 \), and the graph \( G \), such that \( |u_t(x)| \leq L \) for all \( x \in V \) and all $t \in [0,1]$.
\end{lemma}
\begin{proof}
    Firstly, we prove that $u_t(x)$ has a uniform upper bound. Fix $t \in [0,1]$. Let $x_0 \in V$ be a vertex such that $u_t(x_0)=\max_V u_t(x) $. Then 
    \begin{align*}
        -\Delta u_t(x_0)=\frac{1}{\mu_{x_0}} \sum_{y \in V} w_{x_0 y}(u_t(x_0)-u_t(y))  \geq0.
    \end{align*} Therefore, we have $$h_1(x_0)e^{Au_t(x_0)}(e^{Au_t(x_0)}-t)+h_2(x_0)e^{-Bu_t(x_0)}(e^{-Bu_t(x_0)}-t)\leq 0.$$
Thus, $$e^{Au_t(x_0)}(e^{Au_t(x_0)}-t)\leq -\frac{h_2(x_0)}{h_1(x_0)}e^{-Bu_t(x_0)}(e^{-Bu_t(x_0)}-t)\leq \frac{t^2\max_V h_2(x)}{4\min_V h_1(x)},$$
which implies that $$ \max_V u(x) \leq \frac{1}{A} \log\left(\frac{t}{2}+\sqrt{\frac{t^2\max_V h_2(x)}{4\min_V h_1(x)}+\frac{t^2}{4}}\right) \leq \frac{1}{A} \log\left(\frac{1}{2}+\sqrt{\frac{\max_V h_2(x)}{4\min_V h_1(x)}+\frac{1}{4}}\right):= C_1.$$

Secondly, we show that $u_t(x)$ has also a uniform lower bound. Without loss of generality, we may assume $\min_V u_t(x)<0$. For otherwise, $u_t(x)$ has already lower bound $0$. We derive from integrating both sides of equation \eqref{defor:T2} that
$$\int_{V}h_1(x)e^{Au_t(x)}(e^{Au_t(x)}-t)+h_2(x)e^{-Bu_t(x)}(e^{-Bu_t(x)}-t)d\mu = 0,$$
which leads to \begin{align*}
    &\left| \int_{u_t<0} h_2(x) e^{-Bu_t(x)} \left( e^{-Bu_t(x)} - t \right) d\mu \right| \\
=& \left| \int_{u_t\geq0} h_2(x) e^{-Bu_t(x)} \left( e^{-Bu_t(x)} - t \right) d\mu + \int_{V} h_1(x) e^{Au_t(x)} \left( e^{Au_t(x)} - t \right) d\mu \right| \\
\leq& \max_V h_2(x) \int_{u_t\geq0} \left|e^{-Bu_t(x)} \left( e^{-Bu_t(x)} - t \right) \right|d\mu  +  \max_V h_1(x)\int_{V}  \left|e^{Au_t(x)} \left( e^{Au_t(x)} - t \right) \right|d\mu .
\end{align*}
Observe that $$ \int_{u_t\geq0} \left|e^{-Bu_t(x)} \left( e^{-Bu_t(x)} - t \right) \right|d\mu  \leq \max\left\{\frac{t^2}{4} , 1-t\right\} \text{Vol}(G) \leq \text{Vol}(G),$$ and $$ \int_{V}  \left|e^{Au_t(x)} \left( e^{Au_t(x)} - t \right) \right|d\mu  \leq \max \left\{\frac{t^2}{4}, e^{AC_1} \left( e^{AC_1} - t \right)\right\}\text{Vol}(G).$$
This together with the fact that$$\left| \int_{u_t<0} h_2(x)e^{-Bu_t(x)} \left( e^{-Bu_t(x)} - t \right) d\mu \right| \geq \min_V h_2(x) \cdot e^{-B\min_V u_t(x)} \left( e^{-B\min_V u_t(x)} - t \right) \mu_0$$ leads to \begin{align*}
    &\min_V h_2(x) \cdot e^{-B\min_V u_t(x)} \left( e^{-B\min_V u_t(x)} - t \right) \mu_0\\ \leq & \max_V h_2(x) \text{Vol}(G) + \max_V h_1(x) \cdot \max \left\{\frac{t^2}{4}, e^{AC_1} \left( e^{AC_1} - t \right)\right\}\text{Vol}(G)\\ \leq & \max_V h_2(x) \text{Vol}(G) + \max_V h_1(x) \cdot \max \left\{\frac{1}{4}, e^{2AC_1} \right\}\text{Vol}(G):= C_2,
\end{align*}
 where $\mu_0 =\min_{x \in V}\mu (x)>0$. Hence, there holds
$$ \min_V u_t(x) \geq -\frac{1}{B} \log \left(\frac{t}{2}+\sqrt{\frac{C_2}{\min_V h_2(x)\mu_0}+\frac{t^2}{4}}\right)\geq -\frac{1}{B} \log \left(\frac{1}{2}+\sqrt{\frac{C_2}{\min_V h_2(x)\mu_0}+\frac{1}{4}}\right):=C_3.$$This completes the proof.
\end{proof}

In particular, we have the follow result.
\begin{lemma}
\label{priori2}
    Let \( u \) be a solution of equation \eqref{eq:T2}, where \( A \) and \( B \) are positive constants, \( h_1(x) > 0 \), and \( h_2(x)> 0 \). Then there exists a constant \( L \), depending only on \( A \), \( B \), \( h_1 \), \( h_2 \), and the graph \( G \), such that \( |u(x)| \leq L \) for all \( x \in V \).
\end{lemma}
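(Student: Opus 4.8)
The plan is to observe that Lemma \ref{priori2} is nothing but the special case $t=1$ of Lemma \ref{priori3}. Setting $t=1$ in the deformed equation \eqref{defor:T2} recovers \eqref{eq:T2} verbatim, so a solution $u$ of \eqref{eq:T2} is precisely a solution $u_1$ of \eqref{defor:T2} at the endpoint $t=1$. First I would invoke Lemma \ref{priori3} with this value of $t$ to conclude that $|u(x)|=|u_1(x)|\le L$ for every $x\in V$, where $L$ depends only on $A$, $B$, $h_1$, $h_2$, and $G$. That finishes the proof; no further estimate is required.

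If instead one preferred a self-contained argument, I would mirror the two halves of the proof of Lemma \ref{priori3}. For the upper bound I would evaluate \eqref{eq:T2} at a vertex $x_0$ where $u$ attains its maximum: there $-\Delta u(x_0)\ge 0$, which forces $h_1(x_0)e^{Au(x_0)}(e^{Au(x_0)}-1)\le -h_2(x_0)e^{-Bu(x_0)}(e^{-Bu(x_0)}-1)$; since $s(s-1)\ge -\frac14$ for all $s\ge 0$, the right-hand side is at most $\frac{\max_V h_2}{4\min_V h_1}$, and solving the resulting quadratic inequality for $e^{Au(x_0)}$ gives $\max_V u\le C_1$ for an explicit $C_1$. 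For the lower bound I would integrate \eqref{eq:T2} over $V$, isolate the term $\int_{u<0} h_2 e^{-Bu}(e^{-Bu}-1)\,d\mu$, bound the two remaining integrals using $u\le C_1$ together with $s(s-1)\ge -\frac14$, and then exploit that on $\{u<0\}$ the function $e^{-Bu}(e^{-Bu}-1)$ is positive and at least of order $e^{-2B\min_V u}$ at a minimizing vertex; this yields $\min_V u\ge C_3$ for an explicit $C_3$.

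The order of steps is therefore: first identify \eqref{eq:T2} with \eqref{defor:T2} at $t=1$, then quote Lemma \ref{priori3}. There is no genuine obstacle here, because the difficulty has already been absorbed into Lemma \ref{priori3}; the one point to keep in mind is that the constants appearing in Lemma \ref{priori3} were deliberately arranged to be uniform in $t\in[0,1]$, which is exactly what makes the specialization to $t=1$ produce a clean, $t$-free bound.
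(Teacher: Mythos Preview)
Your proposal is correct and matches the paper's approach exactly: the paper proves Lemma~\ref{priori3} first and then states Lemma~\ref{priori2} with the remark ``In particular, we have the follow result,'' i.e., as the special case $t=1$ of Lemma~\ref{priori3}. Your optional self-contained sketch also faithfully reproduces the two-step (maximum-principle upper bound, integration lower bound) argument used in the proof of Lemma~\ref{priori3}.
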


Now, we are prepared to calculate the Brouwer degree.

\begin{lemma}\label{pri:defor:T2}
   Let \( A \) and \( B \) be two positive constants, \( h_1(x) > 0 \), and  \(  h_2(x) > 0 \). Then $$\mathbf{\tilde{d}}_{h_1, h_2}=0.$$
\end{lemma}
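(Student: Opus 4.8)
The plan is to use the homotopy introduced in Lemma \ref{priori3} to deform $\mathcal{G}_{h_1,h_2}$ to the map at $t=0$ and then show the latter has Brouwer degree zero by a direct argument. First I would define $\mathcal{H}(u,t) = -\Delta u + h_1(x)e^{Au}(e^{Au}-t) + h_2(x)e^{-Bu}(e^{-Bu}-t)$, noting that $\mathcal{H}(\cdot,1) = \mathcal{G}_{h_1,h_2}$. By Lemma \ref{priori3} every zero of $\mathcal{H}(\cdot,t)$ satisfies $\|u_t\|_\infty \le L$ uniformly in $t\in[0,1]$, so for $R > L$ the homotopy stays zero-free on $\partial B_R$ and homotopic invariance of the Brouwer degree gives $\text{deg}(\mathcal{G}_{h_1,h_2}, B_R, 0) = \text{deg}(\mathcal{H}(\cdot,0), B_R, 0)$.

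Next I would analyze the $t=0$ map explicitly: $\mathcal{H}(u,0) = -\Delta u + h_1(x)e^{2Au} + h_2(x)e^{-2Bu}$. This has exactly the algebraic form of $\mathcal{F}_{h_1,h_2}$ from \eqref{deg1} but with the constants $A,B$ replaced by $2A,2B$ and with \emph{both} coefficient functions positive. Integrating the equation $\mathcal{H}(u,0)=0$ over $V$ against $d\mu$ kills the Laplacian term (since $\int_V \Delta u\, d\mu = 0$) and leaves $\int_V (h_1 e^{2Au} + h_2 e^{-2Bu})\, d\mu = 0$, which is impossible because the integrand is strictly positive. Hence $\mathcal{H}(\cdot,0)$ has no zero in $B_R$ at all, and by the solution property (Kronecker existence) of the Brouwer degree, $\text{deg}(\mathcal{H}(\cdot,0), B_R, 0) = 0$. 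Combining with the previous paragraph yields $\mathbf{\tilde{d}}_{h_1,h_2} = \text{deg}(\mathcal{G}_{h_1,h_2}, B_R, 0) = 0$, as claimed.

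I expect the only delicate point to be verifying that Lemma \ref{priori3} genuinely applies to the full homotopy $\mathcal{H}(\cdot,t)$ for $t$ ranging over $[0,1]$ — in particular that the bound $L$ there is truly uniform in $t$ including the endpoint $t=0$ — but this is exactly what that lemma asserts, so no new work is needed. A minor alternative, if one prefers not to invoke Kronecker existence for the empty-zero-set case, is to observe directly that for the map $\mathcal{H}(\cdot,0)$ one may further homotope the coefficients $h_1, h_2$ down to small positive constants $\delta$ (keeping positivity, hence keeping the integral obstruction and the uniform bound), and the resulting constant-coefficient map still has no zeros; either way the degree is forced to be $0$. This completes the proof of Lemma \ref{pri:defor:T2}, and therefore of Theorem \ref{deg:T2}.
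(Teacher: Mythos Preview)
Your proof is correct and follows essentially the same route as the paper's: both use the homotopy $-\Delta u + h_1 e^{Au}(e^{Au}-t) + h_2 e^{-Bu}(e^{-Bu}-t)$, invoke Lemma \ref{priori3} for the uniform a priori bound, and reduce to the $t=0$ endpoint, which is exactly $\mathcal{F}_{h_1,h_2}$ with $(2A,2B)$ in place of $(A,B)$. The paper then simply cites $\mathbf{d}_{h_1,h_2}=0$ from Theorem \ref{deg:T1} (whose proof via Lemma \ref{thm1part2} is precisely your integration argument), so your version merely unpacks that citation explicitly.
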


\begin{proof}
    We define a smooth map $\tilde{\mathcal{G}}: L^{\infty}(V)\times[0,1]\rightarrow L^{\infty}(V)$ by $$\tilde{\mathcal{G}}(u,t)= -\Delta u + h_1(x)e^{Au}(e^{Au}-t)+h_2(x)e^{-Bu}(e^{-Bu}-t).$$ By Lemma \ref{priori3} and homotopic invariance, we derive $$\mathbf{\tilde{d}}_{h_1, h_2}=\text{deg}(\tilde{\mathcal{G}}(u,1), B_R, 0)=\text{deg}(\tilde{\mathcal{G}}(u,0), B_R, 0)=\mathbf{d}_{h_1, h_2}=0,$$as desired.
\end{proof}

\section{Proof of Theorem \ref{multisolutions}}
In this section, we adopt the variational method to find a solution to equation \eqref{eq:T2} with the assumption in Theorem \ref{multisolutions}.

\begin{lemma}
    Let $h_1(x)>0$, $h_2(x)>0$ be two given functions on $V$, and $A$, $B>0$ be two constants. If $A\max_V h_1(x)<B\min_V h_2(x)$ or $A\min_V h_1(x)>B\max_V h_2(x)$,  then the equation \eqref{eq:T2} has at least one solution.
\end{lemma}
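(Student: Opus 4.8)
The plan is to find the solution variationally, as a minimizer of the energy functional
\[
J(u)=\int_V\left(\frac12|\nabla u|^2+\frac{h_1(x)}{2A}\big((e^{Au}-1)^2-1\big)+\frac{h_2(x)}{2B}\big(1-(e^{-Bu}-1)^2\big)\right)d\mu
\]
defined on the (finite-dimensional) space of all functions $u:V\to\R$. A direct differentiation shows $J'(u)=\mathcal G_{h_1,h_2}(u)$ as in \eqref{deg2}, so that critical points of $J$ are exactly the solutions of \eqref{eq:T2}. Since $J$ is not bounded below on the whole space (taking $u\equiv -n$ with $n\to\infty$ sends the term $-\frac{h_2}{2B}(e^{-Bu}-1)^2$ to $-\infty$), direct global minimization is impossible; instead I would minimize $J$ over the closed convex cone $\mathcal{K}=\{u:\ u(x)\ge 0\ \text{for all}\ x\in V\}$.

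The first step is to show that $J|_{\mathcal{K}}$ is coercive. For $u\in\mathcal{K}$ one has $(e^{Au}-1)^2\ge 0$ and $0\le(e^{-Bu}-1)^2<1$, so, using $h_1>0$ and $h_2>0$,
\[
J(u)\ \ge\ \frac12\int_V|\nabla u|^2\,d\mu+\frac{\min_V h_1}{2A}\int_V(e^{Au}-1)^2\,d\mu-\frac{\max_V h_1}{2A}\,\text{Vol}(G).
\]
If $u_n\in\mathcal{K}$ with $\max_V|u_n|\to\infty$ and $\int_V|\nabla u_n|^2\,d\mu$ stays bounded, then $\max_V u_n-\min_V u_n$ is bounded by the Poincar\'e-type estimate in \eqref{ineq-0}, while $\max_V u_n=\max_V|u_n|\to\infty$ (as $u_n\ge 0$), so $\min_V u_n\to+\infty$ and hence $\int_V(e^{Au_n}-1)^2\,d\mu\to\infty$; otherwise $\int_V|\nabla u_n|^2\,d\mu\to\infty$. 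In either case $J(u_n)\to\infty$, so $J$ attains its infimum over $\mathcal{K}$ at some $u_*\in\mathcal{K}$.

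Next I would show that $u_*$ solves \eqref{eq:T2}. Writing the first-order conditions for the constrained minimum vertexwise: at any $x$ with $u_*(x)>0$ one may perturb $u_*$ in both directions inside $\mathcal{K}$, which gives $\mathcal G_{h_1,h_2}(u_*)(x)=0$; at any $x_0$ with $u_*(x_0)=0$ one only obtains $\mathcal G_{h_1,h_2}(u_*)(x_0)\ge 0$, and since both nonlinear terms vanish at $u=0$ this reads $-\Delta u_*(x_0)\ge 0$. But $-\Delta u_*(x_0)=-\tfrac{1}{\mu(x_0)}\sum_{y\sim x_0}w_{x_0y}u_*(y)\le 0$, so $-\Delta u_*(x_0)=0$, which forces $u_*(y)=0$ for every $y\sim x_0$. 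As $G$ is connected, the zero set of $u_*$ is therefore $\emptyset$ or all of $V$: either $u_*>0$ everywhere, so $u_*$ is an interior minimizer of $J$ over $\mathcal{K}$ and hence an unconstrained critical point of $J$, i.e.\ a solution of \eqref{eq:T2}; or $u_*\equiv 0$, which is itself a solution of \eqref{eq:T2}. In both cases we obtain at least one solution.

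Finally, the two alternative hypotheses determine which case occurs, which is precisely what the proof of Theorem \ref{multisolutions} will need (a solution that is a local minimizer of $J$, so that its local degree contribution is $+1$ and, combined with $\mathbf{\tilde{d}}_{h_1,h_2}=0$ from Theorem \ref{deg:T2}, forces a second solution). The relevant second-order computation is $J''(0)[\phi,\phi]=\int_V|\nabla\phi|^2\,d\mu+\int_V\big(Ah_1(x)-Bh_2(x)\big)\phi^2\,d\mu$. If $A\min_V h_1>B\max_V h_2$, this quadratic form is positive definite, so $u\equiv 0$ is a strict local minimizer of $J$ (and a solution). If $A\max_V h_1<B\min_V h_2$, then testing $J$ with the small constant $u\equiv\epsilon>0$ gives $J(u)-J(0)=\tfrac{\epsilon^2}{2}\int_V(Ah_1-Bh_2)\,d\mu+o(\epsilon^2)<0$, so $\inf_{\mathcal{K}}J<J(0)$, which rules out $u_*\equiv 0$ and leaves $u_*>0$; being an interior minimizer over $\mathcal{K}$, this $u_*$ is a local minimizer of $J$ on the whole space. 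I expect the main obstacle to be the step that rules out a nontrivial minimizer lying on the boundary of $\mathcal{K}$: this is where the special structure that the nonlinearity vanishes identically at $u\equiv 0$, a maximum-principle-type argument, and the connectedness of $G$ must be combined; establishing coercivity on $\mathcal{K}$ is the other point requiring some care.
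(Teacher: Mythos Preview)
Your argument is correct and yields what is really needed for Theorem \ref{multisolutions}: a \emph{local minimizer} of $J$ (the statement of the lemma itself is almost tautological, since $u\equiv 0$ always solves \eqref{eq:T2}). Your coercivity step on $\mathcal K=\{u\ge 0\}$ and the propagation argument (if $u_*(x_0)=0$ then $u_*\equiv 0$ by connectedness) are both sound, and the second-variation computation correctly distinguishes the two hypotheses.

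The paper takes a different, somewhat shorter, route: it uses ordered sub/super\-solutions. Under $A\max_V h_1<B\min_V h_2$ one can pick a small constant $\delta>0$ with $\mathcal G_{h_1,h_2}(\delta)<0$ and a large constant $\beta>0$ with $\mathcal G_{h_1,h_2}(\beta)>0$, so that the constant functions $\delta$ and $\beta$ are a sub- and supersolution. One then minimizes $J$ over the \emph{compact} order interval $\{\delta\le u\le\beta\}$ and shows, by a one-line first-variation test with $\phi=\delta_{x_0}$, that the minimizer cannot touch either barrier; hence it lies in the open box and is automatically an interior local minimizer of $J$. The other hypothesis is handled by the obvious symmetry $u\mapsto -u$. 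Compared with your approach, the paper avoids the coercivity argument (compactness of the box is immediate) and never needs the ``zero set propagates'' step; the hypothesis enters at the very start, to build the barriers, rather than at the end to rule out $u_*\equiv 0$. Your method has the virtue of working directly on the natural cone and of making transparent why $u\equiv 0$ is the only possible boundary minimizer, but it trades that for a bit more work on coercivity and on the KKT/maximum-principle analysis.
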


\begin{proof}
        If $A\max_V h_1(x)<B\min_V h_2(x)$, then there exists a small $\delta>0$ such that $$\frac{h_{1}(x)}{h_2(x)}e^{(A+B)\delta}<-\frac{e^{-B\delta}-1}{e^{A\delta}-1}$$ for all $x\in V$. That is, 
        \begin{align}\label{eq:Gdelta}
            \mathcal{G}_{h_1, h_2}(\delta)=h_1(x)e^{A\delta}(e^{A\delta}-1)+h_2(x)e^{-B\delta}(e^{-B\delta}-1)<0, \ \ \text{for\  all} \ \ x\in V .
        \end{align}
       Obviously, there exists a sufficiently large constant $\beta>0$ such that 
       \begin{align}\label{eq:Gbeta}
           \mathcal{G}_{h_1, h_2}(\beta)=h_1(x)e^{A\beta}(e^{A\beta}-1)+h_2(x)e^{-B\beta}(e^{-B\beta}-1)>0, \ \ \text{for\  all} \ \ x\in V .
       \end{align}
       
        We define the energy functional $J: L^{\infty}(V)\rightarrow\mathbb{R}$ as 
        \begin{equation}\label{energyfunc}
            J(u)=\frac{1}{2}\int_V|\nabla u|^2+\frac{1}{A}h_1(x)(e^{Au}-1)^2-\frac{1}{B}h_2(x)(e^{-Bu}-1)^2d\mu.
        \end{equation}
        It is direct to check $J \in C^{2}(L^{\infty}(V), \mathbb{R})$, since $L^{\infty}(V)$ is a finite-dimensional Banach space. As the set $\left\{u \in L^{\infty}(V): \delta \leq u(x) \leq \beta \ \text{for\  all} \ \ x\in V\right\} $ is bounded and closed in $L^{\infty}(V)$, we find some $\tilde{u} \in L^{\infty}(V)$ satisfying $\delta \leq \tilde{u} \leq \beta$ for all $x \in V$ and 
        \begin{align}\label{eq:Jmin}
            J(\tilde{u})=\min_{\delta \leq u \leq \beta}J(u).
        \end{align}
        We {\it claim} that
\begin{equation}\label{eq:region}
    \delta< \tilde{u}(x)< \beta \text{\ for \,all\ \,}  x\in V.
\end{equation}
In fact, if $\tilde{u}(x_0)=\delta$, then we take a small $\epsilon>0$ such that
$$\delta \leq \tilde{u}(x)+t\delta_{x_0}(x) \leq \beta,\quad\forall x\in V,\,\forall t\in(0,\epsilon),$$
where $\delta_{x_0}(x)$ denotes the dirac function at $x_0$. Combining with \eqref{eq:Gdelta} and \eqref{eq:Jmin}, we have
\begin{align}\label{eq:pertuba}
  \notag  0 \leq& \left. \frac{d}{dt} J(\tilde{u} + t \delta_{x_0}) \right|_{t=0}\\
   \notag   =&\int_V\left(-\Delta \tilde{u}+h_1(x) e^{A\tilde{u}}(e^{A\tilde{u}}-1)+h_2(x) e^{-B\tilde{u}}(e^{-B\tilde{u}}-1)\right)\delta_{x_0}d\mu\\
   \notag   =&-\Delta \tilde{u}(x_0)+h_1(x) e^{A\tilde{u}(x_0)}(e^{A\tilde{u}(x_0)}-1)+h_2(x) e^{-B\tilde{u}(x_0)}(e^{-B\tilde{u}(x_0)}-1)\\
    \notag  =&-\Delta \tilde{u}(x_0)+\mathcal{G}_{h_1, h_2}(\delta)\\
    <&-\Delta \tilde{u}(x_0).
\end{align}

On the other hand, since $\tilde{u}(x)\geq \tilde{u}(x_0)$ for all $x\in V$, we conclude $\Delta \tilde{u}(x_0)\geq 0$,
which contradicts \eqref{eq:pertuba}. Hence, $\tilde{u}(x)>\delta$ for all $x\in V$. Similarly, we can also rule out the possibility that $\tilde{u}(x_1)=\beta$ for some $x_1\in V$. This confirms our claim. 

Combining \eqref{eq:Jmin} and \eqref{eq:region}, we conclude
that $\tilde{u}$ is a local minimum critical point of $J$. In particular, $\tilde{u}$ is a solution of the Tzitz\'eica equation \eqref{eq:T2}. The similar argument holds true if $A\min_V h_1(x)>B\max_V h_2(x)$. This completes the proof.
\end{proof}

\emph{Proof of Theorem \ref{multisolutions}.}

Let $\tilde{u}$ be a local minimum critical point of $J$ that is defined as \eqref{energyfunc}. With no loss of generality, we may assume $\tilde{u}$ is the unique
critical point of $J$. For otherwise, $J$ has already at least two critical points, and the proof ends. Set $a:=J(\tilde{u})$.
Recall the $q$-th critical group of ${J}$ at $\tilde{u}$ is defined by (\cite{Chang93}, Chapter 1, Definition 4.1)
 \begin{align}\label{group}
    \mathsf{C}_q({J},\tilde{u})=\mathsf{H}_q({J}^a\cap {U},{J}^a\setminus\{\tilde{u}\}
    \cap {U},\mathbf{G}),
 \end{align}
 where ${J}^a=\{u\in L^\infty(V):{J}(u)\leq a\}$, $U$ is a neighborhood of
 $\tilde{u}\in L^\infty(V)$, $\mathsf{H}_q$
 is the singular homology group with the coefficients groups $\mathbf{G}$. By the excision property of
 $\mathsf{H}_q$, we derive that the definition of $\mathsf{C}_q({J},\tilde{u})$ is independent on the choice of $U$. 
 Since $\tilde{u}$ is the unique local minimum critical point, we get ${J}^a=\{\tilde{u}\}$. Therefore, we deduce
\begin{align}\label{cri}
\notag C_q({J},\tilde{u}) &= \mathsf{H}_q(J^a \cap U, \{J^a \setminus \{\tilde{u}\} \cap U; \mathbf{G}) \\
\notag &= \mathsf{H}_q(\{\tilde{u}\}, \varnothing; \mathbf{G}) \\
\notag &= \mathsf{H}_q(\{\tilde{u}\}; \mathbf{G}) \\
&= \delta_{q0} \mathbf{G},
\end{align}
where
\begin{align*}
\delta_{q0} \mathbf{G} = 
\begin{cases} 
\mathbf{G}, & q = 0, \\
0, & q > 0.
\end{cases}
\end{align*}
We claim that $J$ satisfies the Palais-Smale condition at any value $c \in \mathbb{R}$. Indeed, let $\{u_k\}$ be a sequence such that $J(u_k)\rightarrow c\in\mathbb{R}$ and $J^\prime(u_k)(\phi)\rightarrow 0$ for any $\phi \in L^{\infty}(V)$
as $k\rightarrow\infty$. Using the method of proving Lemma \ref{priori3}, we obtain that $\{u_k\}$ is uniformly bounded. Since $L^{\infty}(V)$ is pre-compact,
then up to a subsequence, $\{u_k\}$ converges uniformly to some $u^\ast \in L^{\infty}(V)$ which is a critical point of $J$, as claimed. 
Notice  that
$$DJ(u)=-\Delta u + h_1(x)e^{Au}(e^{Au}-1)+h_2(x)e^{-Bu}(e^{-Bu}-1)=\mathcal{G}_{h_1,h_2}(u).$$
According to (\cite{Chang93}, Chapter 2, Theorem 3.2), based on (\ref{cri}), we have, for sufficiently large $R$,
$$\deg(\mathcal{G}_{h_1,h_2},B_{R},0)=\deg(DJ,B_R,0)=\sum_{q=0}^\infty (-1)^q{\rm rank}\, \mathsf{C}_q(J,\tilde{u})={\rm rank}\mathbf{G}\neq 0.$$
This contradicts $\deg(\mathcal{G}_{h_1,h_2},B_{R},0)=0$ derived from Theorem \ref{deg:T2}. Therefore the equation \eqref{eq:T2} has at least two different solutions,
and the proof of Theorem \ref{multisolutions} is finished. $\hfill\Box$

\section*{Acknowledgement}
This work is supported by the New Lotus Scholars Program PB22000259. H.Z. is very grateful to Yijian Zhang  for his helpful discussions.

\end{document}